\font\smallit=cmti10
\font\smalltt=cmtt10
\renewcommand\section{\@startsection {section}{1}{\z@}
{-30pt \@plus -1ex \@minus -.2ex}
{2.3ex \@plus.2ex}
{\normalfont\normalsize\bfseries\boldmath}}
\renewcommand\subsection{\@startsection{subsection}{2}{\z@}
{-3.25ex\@plus -1ex \@minus -.2ex}
{1.5ex \@plus .2ex}
{\normalfont\normalsize\bfseries\boldmath}}
\renewcommand{\@seccntformat}[1]{\csname the#1\endcsname. }
\newtheorem{theorem}{Theorem}
\theoremstyle{definition}
\newtheorem{conjecture}{Conjecture}
\begin{document}

\begin{center}
\uppercase{\bf \boldmath Sparse Admissible Sets and a Problem of Erd\H{o}s and Graham}
\vskip 20pt
{\bf Desmond Weisenberg}\\
{\smallit Mathematical Institute, University of Oxford, United Kingdom}\\
{\tt desmondweisenberg@gmail.com} \\
\end{center}
\vskip 20pt

\centerline{\bf Abstract}

\noindent
Erd\H{o}s and Graham asked whether any sparse enough admissible set of natural numbers can be translated into a subset of the primes. By using a greedy construction involving powers of primitive roots, we prove that there exist arbitrarily sparse infinite admissible sets that cannot be translated into a subset of the primes, thus answering this question in the negative. We present three additional constructions as well.

\pagestyle{myheadings}
\markright{\smalltt INTEGERS: 24 (2024)\hfill}
\thispagestyle{empty}
\baselineskip=12.875pt
\vskip 30pt

\section{A Problem of Erd\H{o}s and Graham}

Define a set $A \subseteq \mathbb{N}$ as \emph{admissible} if there does not exist a prime $p$ such that $A$ contains at least one element in each residue class mod $p$. For finite admissible $A$, the Hardy-Littlewood $k$-tuple conjecture states that there are infinitely many $n \in \mathbb{N}$ such that $A + n$ is contained in the primes. (More precisely, the conjecture predicts a certain asymptotic distribution of such $n$, but even the infinitude thereof is still an open question.) This is a major unsolved problem in number theory.

Erd\H{o}s and Graham have asked a similar question for infinite sets: if a sequence of numbers that forms an admissible set grows rapidly enough (or, equivalently, if an admissible set is sparse enough), must it have a translation that is contained in the primes? (Here, the primes are only taken to be positive.) They asked this in their book \cite[p. 85]{ErdosBook}, and it is also listed as problem 429 on Thomas Bloom's website erdosproblems.com \cite{ErdosProblems}. This question can be stated more formally:

\begin{conjecture}[\bf{Erd\H{o}s and Graham}]
\label{ErdosConjecture}
There is a non-decreasing, unbounded function $f: \mathbb{N} \to \mathbb{Z}_{\geq 0}$ such that if $A \subseteq \mathbb{N}$ is admissible and $|A \cap \{1, \dots, N\}| \leq f(N)$ for all $N$, then there exists $n \in \mathbb{Z}$ such that $A + n$ is contained in the primes.
\end{conjecture}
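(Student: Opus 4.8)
The plan is to refute Conjecture~\ref{ErdosConjecture} by showing that for \emph{every} non-decreasing unbounded $f:\mathbb{N}\to\mathbb{Z}_{\geq 0}$ there is an admissible $A\subseteq\mathbb{N}$ with $|A\cap\{1,\dots,N\}|\leq f(N)$ for all $N$ and no integer translate contained in the primes. Fix once and for all an integer $g\geq 2$ (say $g=2$) and build $A$ as a strictly increasing sequence $a_1<a_2<\cdots$ in which every $a_t$ is a power $g^{k_t}$ with $2\le k_1<k_2<\cdots$. Restricting to powers of a single base makes admissibility automatic: for a prime $p\mid g$ every element of $A$ is $\equiv 0\pmod p$, so $A$ misses the class $1\bmod p$; for a prime $p\nmid g$ every element of $A$ is a unit mod $p$, so $A$ misses $0\bmod p$. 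Thus \emph{any} such $A$ is admissible, and the exponents $k_t$ are entirely at our disposal. Sparsity is equally cheap: put $M_t=\min\{N: f(N)\geq t\}$, which is finite and non-decreasing in $t$ since $f$ is unbounded and non-decreasing; if we always arrange $a_t\geq M_t$, then for $N$ with $M_t\leq N<M_{t+1}$ we get $|A\cap\{1,\dots,N\}|\leq t\leq f(M_t)\leq f(N)$, as required.

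The heart of the matter is an observation I would isolate as a lemma: for every $n\in\mathbb{Z}$ there are infinitely many $k$ for which $g^k+n$ is not a positive prime. If $\gcd(n,g)>1$, pick a prime $\ell\mid\gcd(n,g)$; then $\ell\mid g^k+n$ for all $k\geq 1$, and once $k$ is large enough that $g^k+n>\ell$ the value $g^k+n$ is a composite multiple of $\ell$. If $\gcd(n,g)=1$ (with $n=0$ trivial, as $g^k$ is composite for $k\geq 2$), choose any large $m$ and let $\ell$ be a prime factor of $g^m+n$; since $\gcd(n,g)=1$ one checks $\ell\nmid g$, so $g$ has a well-defined multiplicative order $d$ modulo $\ell$, whence $g^{m+dj}+n\equiv g^m+n\equiv 0\pmod\ell$ for every $j\geq 0$. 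Taking $j$ large makes $g^{m+dj}+n$ a multiple of $\ell$ exceeding $\ell$, hence composite. In particular such $k$ can always be found larger than any prescribed bound.

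With the lemma in hand I would run a greedy construction. Enumerate $\mathbb{Z}=\{n_1,n_2,\dots\}$ and add one element of $A$ per stage. At stage $t$: if every element of $\{a_1,\dots,a_{t-1}\}+n_t$ already happens to be a positive prime, use the lemma to choose $k_t>k_{t-1}$ with $g^{k_t}\geq M_t$ and $g^{k_t}+n_t$ not a positive prime; otherwise ($n_t$ is already ``blocked'') simply take any power $g^{k_t}>g^{k_{t-1}}$ with $g^{k_t}\geq M_t$ as padding. In either case the shift $n_t$ is henceforth blocked, since $A+n_t$ contains a non-prime and enlarging $A$ only adds elements to each translate; the shift $0$ is blocked from stage $1$ because $k_1\geq 2$ makes $a_1$ composite. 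Since every integer occurs as some $n_t$, no translate $A+n$ lies in the positive primes, and together with admissibility and the sparsity bound this yields, for the given $f$, an admissible $f$-sparse set with no prime translate, contradicting Conjecture~\ref{ErdosConjecture}. The only delicate point is the lemma, and the thing to be careful about is \emph{not} trying to prove the (plausible but apparently hard) assertion that $g^k+n$ is composite infinitely often by accident --- instead one manufactures the composites deliberately from a single prime value via the periodicity of $g^k$ modulo that prime. A variant phrased with a fixed primitive root $g$ and primes for which $g$ is a primitive root works in the same way and is perhaps more transparent, at the cost of invoking a result toward Artin's conjecture.
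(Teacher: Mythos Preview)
Your argument is correct. The lemma is sound: in the coprime case, fixing a prime $\ell\mid g^m+n$ and using the periodicity of $g^k\bmod\ell$ produces an arithmetic progression of exponents $k$ with $\ell\mid g^k+n$, and all but finitely many of these give composites exceeding $\ell$. The sparsity bookkeeping via $M_t=\min\{N:f(N)\ge t\}$ is also clean; since $a_{t+1}\ge M_{t+1}>N$ whenever $M_t\le N<M_{t+1}$, at most $t$ elements of $A$ lie in $\{1,\dots,N\}$, and $t\le f(M_t)\le f(N)$.

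Your route differs from the paper's proof of Theorem~\ref{ErdosResult}, which fixes a base $a$ that is a primitive root for infinitely many primes (invoking Gupta--Ram Murty/Heath-Brown), fills all nonzero residue classes modulo each such prime with two elements, and then argues that any translate $n$ with $A+n\subseteq$ primes would have to be divisible by infinitely many primes, forcing $n=0$. Your construction is instead a hybrid of two of the paper's \emph{alternative} constructions in Section~2: like the third construction you work inside the powers of a fixed base $g\ge 2$ so that admissibility is automatic, and like the fourth construction you enumerate $\mathbb{Z}$ and block each shift one at a time. What you gain over the paper's main proof is elementarity---no input from Artin-type results---and what you gain over the paper's fourth construction is that you never need to invoke the Chinese Remainder Theorem to maintain admissibility at each step, since restricting to $\{g^k\}$ handles that for free. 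The paper's third construction, by contrast, blocks translates via the structural property ``no residue class modulo any prime contains exactly one element of $C$,'' whereas you block them explicitly via the lemma; the two are equivalent in effect but your lemma is more direct.
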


It turns out that this conjecture is false; that is, there exist arbitrarily sparse infinite admissible sets that cannot be translated into a subset of the primes, and there is actually a very straightforward construction for these arbitrarily sparse sets. First, we turn to a brief discussion of primitive roots. Artin's primitive root conjecture states that every integer $a \in \mathbb{Z}$ that is not -1 or a square number is a primitive root mod $p$ for infinitely many primes $p$. Though Artin's conjecture is still unsolved, many partial results are known; in particular, while we do not have an explicit example, we know that there exists at least one positive integer $a$ where this statement is true. (See \cite{Artin1} and \cite{Artin2} for much stronger results in this vein.) Fix such a positive integer $a$, and let $S = \{a^k : k \in \mathbb{N}\}$. Clearly, $S$ is admissible, and therefore all of its subsets are admissible as well. Also, let $p_1, p_2, p_3, \dots$ be the infinite sequence of primes that have $a$ as a primitive root.

We answer the question of Erd\H{o}s and Graham in the negative by proving that $S$ has arbitrarily sparse subsets that cannot be translated into a subset of the primes.

\begin{theorem}
\label{ErdosResult}
Let $f: \mathbb{N} \to \mathbb{Z}_{\geq 0}$ be non-decreasing and unbounded. Then there exists a subset $A \subseteq S$ such that $|A \cap \{1, \dots, N\}| \leq f(N)$ for all $N$ and such that there does not exist an $n \in \mathbb{Z}$ such that $A + n$ is contained in the primes. In particular, Conjecture \ref{ErdosConjecture} is false.
\end{theorem}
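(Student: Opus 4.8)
The plan is to construct $A$ as a set of powers of $a$, arranged so that \emph{every} integer translate $A+n$ provably contains a composite number, whence no translate can lie in the primes. The engine is the primitive-root property: since $a$ is a primitive root modulo each $p_i$, the map $k \mapsto a^k \bmod p_i$ carries every complete residue system modulo $p_i-1$ onto the full set of nonzero residues modulo $p_i$. Hence for each $i$ and each $c \in \{1,\dots,p_i-1\}$ there are infinitely many exponents $k$ with $a^k \equiv c \pmod{p_i}$, and in particular one may always choose such a $k$ with $a^k$ exceeding any prescribed bound.

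First I would fix a single distinguished element: pick $k_1 \ge 2$, put $a^{k_1}$ into $A$, and require it to be the least element of $A$. This disposes of two easy cases at once. If $n=0$, then $a^{k_1} \in A+n$ is composite (as $a \ge 2$), so $A+n$ is not contained in the primes; and if $n \le -a^{k_1}$, then $a^{k_1}+n \le 0$ is not a positive prime. For the remaining translates — those with $-a^{k_1} < n$ and $n \neq 0$ — choose any prime $p_i$ with $p_i \nmid n$ (all but finitely many $p_i$ qualify) and set $c \equiv -n \pmod{p_i}$, a nonzero residue. The key requirement I impose on $A$ is that for \emph{every} $i \ge 1$ and \emph{every} $c \in \{1,\dots,p_i-1\}$ it contains an element $a^{k_{i,c}}$ with $a^{k_{i,c}} \equiv c \pmod{p_i}$ and $a^{k_{i,c}} > p_i + a^{k_1}$. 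Granting this, $p_i \mid a^{k_{i,c}} + n$, while $a^{k_{i,c}} + n > (p_i + a^{k_1}) - a^{k_1} = p_i > 0$; thus $a^{k_{i,c}} + n$ is a proper multiple of $p_i$, hence composite, and $A+n$ is not contained in the primes. So the theorem reduces to exhibiting such an $A$ with the prescribed sparsity.

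This is the greedy part. There are only countably many pairs $(i,c)$; list them, together with the slot for $a^{k_1}$, as a single sequence of tasks, and process them one at a time, at stage $j$ choosing the relevant exponent so large that (i) its congruence and size constraints hold, (ii) the resulting power exceeds every power chosen at an earlier stage, and (iii) the resulting power exceeds $N_j$, where $N_j$ is any integer with $f(N_j) \ge j$ — one exists because $f$ is unbounded. These demands can be met simultaneously because, for a fixed pair $(i,c)$, the admissible exponents form an infinite arithmetic progression modulo $p_i-1$, so no finite lower bound on $a^{k_{i,c}}$ can exhaust them. Writing the completed set as $A = \{a^{m_1} < a^{m_2} < \cdots\} \subseteq S$, any $N$ with $a^{m_j} \le N < a^{m_{j+1}}$ satisfies $|A \cap \{1,\dots,N\}| = j \le f(N_j) \le f(N)$ by monotonicity of $f$, while for $N < a^{m_1}$ the inequality is trivial. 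Finally, since $S$ is admissible, so is its subset $A$.

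I expect the only genuinely delicate point to be the simultaneous fulfilment of the covering, ordering, and sparsity demands in the greedy part; the remainder — the congruence bookkeeping, the disposal of the translates with $n \le -a^{k_1}$ or $n=0$, and the verification of the sparsity inequality — is routine once the roles of the primitive-root hypothesis and of the unboundedness and monotonicity of $f$ have been isolated as above.
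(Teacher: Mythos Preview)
Your proof is correct and follows essentially the same strategy as the paper: greedily select powers of $a$ so as to cover every nonzero residue class modulo each $p_i$, pushing each new element past a threshold $N_j$ with $f(N_j)\ge j$ to enforce the sparsity bound. The only tactical difference is that the paper places \emph{two} elements in each nonzero residue class (so that any translate $A+n$ with $p_i\nmid n$ contains two multiples of $p_i$, at least one of which is composite, forcing $n$ to be divisible by every $p_i$ and hence $n=0$), whereas you place a single sufficiently large element per class and dispose of the translates $n=0$ and $n\le -\min(A)$ by a direct case analysis; both devices accomplish the same thing, namely ruling out that the resulting multiple of $p_i$ is $p_i$ itself.
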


\begin{proof}
Observe that for each prime $p_i$ that has $a$ as a primitive root, $S$ has infinitely many elements in each nonzero residue class mod $p_i$. Using this, we can construct $A$ in a greedy manner as follows: first, add two members of each nonzero residue class of $p_1$, then add two members of each nonzero residue class of $p_2$, and so on. Only add elements to $A$ that are greater than all previous elements, and only add elements that are large enough to respect the sparsity condition; more formally, the $m$th element $e_m$ to be added to $A$ must always be large enough so that $f(e_m) \geq m$.

Clearly, $A$ satisfies the sparsity requirement, and for each $p_i$, the set $A$ has at least two members of each nonzero residue class mod $p_i$. We now prove that there is no $n \in \mathbb{Z}$ such that $A + n$ is contained in the primes. For the sake of contradiction, suppose such an $n$ exists. We claim that $n$ is a multiple of every $p_i$; this is true because if $n \not\equiv 0 \pmod{p_i}$, then $A$ has at least two elements congruent to $-n \pmod{p_i}$, so $A + n$ will therefore have at least two elements congruent to $0 \pmod{p_i}$ and thus cannot be contained in the primes. As such, $n$ is a multiple of each $p_i$; that is, $n$ has infinitely many prime factors. The only way this is possible is if $n = 0$, which would mean that $A$ itself is contained in the primes. However, $A$ has infinitely many powers of $a$, so this is clearly not the case.
\end{proof}

\section{Further Constructions}

The above construction was the first one discovered by the author, and it was on this basis that Bloom first marked the problem as ``solved'' on erdosproblems.com. However, since the discovery and uploading of the preprint of that solution, the author has realized that it is in fact possible to solve this problem without using the previously cited result on primitive roots. This makes the solution even more elementary. We demonstrate this by presenting three more constructions of arbitrarily sparse infinite subsets of $\mathbb{N}$ that are admissible yet cannot be translated into a subset of the primes.

Before we present the second construction, observe that the sets $A$ from the first construction were unable to be translated into the primes because they satisfied two properties. First, $A$ had at least one non-prime. Second, there were infinitely many primes $p$ such that $A$ had at least two elements in every nonzero residue class mod $p$. By the logic of the proof of Theorem \ref{ErdosResult}, any set with these two properties cannot be translated into a subset of the primes.

We now present our second construction: for any non-decreasing, unbounded function $f: \mathbb{N} \to \mathbb{Z}_{\geq 0}$, there is an even more ``direct'' greedy way to build an infinite admissible set $B \subseteq \mathbb{N}$ such that $|B \cap \{1, \dots, N\}| \leq f(N)$ for all $N$ and such that there does not exist an $n \in \mathbb{Z}$ such that $B + n$ is contained in the primes. First, add a composite number (or 1, if possible) to $B$ that is large enough to respect the sparsity condition. Then choose some prime $p_1$ larger than the initial number, and add two elements to each nonzero residue class mod $p_1$. Each element added must be large enough to respect the sparsity condition. Furthermore, each element must preserve admissibility --- the Chinese Remainder Theorem guarantees that such an element can always be chosen. Then choose a prime $p_2$ larger than all existing elements of $B$, add two elements from each nonzero residue class mod $p_2$ as described, and keep repeating this process. This completes the second construction.

The first two constructions utilize a set having the property of having at least one non-prime and having at least two elements in each nonzero residue class mod $p$ for infinitely many primes $p$. However, it turns out that a slightly different condition also suffices to show that a set cannot be translated into a subset of the primes. Namely, suppose $C \subseteq{N}$ is nonempty and has the property that for all primes $p$, no residue class mod $p$ has exactly one element in $C$. This also guarantees that $C$ cannot be translated into a subset of the primes. For the sake of contradiction, suppose $C$ has this property and $C + n$ is contained in the primes. Then if we fix $c_1 \in C$, we have that $c_1 + n = p$ for some prime $p$, so $c_1 \equiv -n \pmod{p}$. By our hypothesis, there exists $c_2 \in C$ such that $c_2 \not= c_1$ and $c_2 \equiv -n \pmod{p}$, so $c_2 + n$ is a multiple of $p$ that is not $p$ itself and is therefore not prime. This contradicts that $C + n$ is contained in the primes.

Using this, we demonstrate a third construction that is very similar to the first construction. In the first construction, we proved that if $a$ is a positive integer that is a primitive root for infinitely many primes, then the set of powers of $a$ has arbitrarily sparse infinite subsets that cannot be translated into a subset of the primes. Now, we prove that this is actually true for any integer $c \geq 2$, even if $c$ is a counterexample to Artin's primitive root conjecture (if one exists) or a square number. (Clearly, any subset of powers of $c$ is also admissible, making it a solution to the problem.) Construct a set $C \subseteq \mathbb{N}$ as follows: for all primes $p$ that do not divide $c$, add to $C$ at least two powers of $c$ in each residue class mod $p$ where powers of $c$ exist. This works even if there are not powers of $c$ in every nonzero residue class mod $p$. It is clear that these sets $C$ can be made arbitrarily sparse, that they are admissible, and  for all primes $p$, no residue class mod $p$ has exactly one element in $C$. As such, this is also a construction of arbitrarily sparse infinite admissible sets that cannot be translated into a subset of the primes.

For our last construction, suppose $D \subseteq \mathbb{N}$ has the property that for all integers $n \geq 2 - \text{min}(D)$, there exists $d_n \in D$ such that $d_n + n$ is not prime. It clearly follows that there does not exist $n \in \mathbb{Z}$ such that $D + n$ is contained in the primes. We show that it is possible to construct arbitrarily sparse infinite admissible sets with this property.

For any non-decreasing, unbounded function $f: \mathbb{N} \to \mathbb{Z}_{\geq 0}$, we construct $D$ as follows: first, add a composite number (or 1, if possible) to $D$ that is large enough to respect the sparsity condition. Denote this number $d_0$. For every positive integer $n$ (as well as every integer $n \in [2 - d_0, 0)$ if $d_0 > 1$), add an integer $d_n$ to $D$ such that $d_n$ is larger than all previous elements that $D$ has, $d_n$ is large enough to respect the sparsity condition, $d_n$ preserves the admissibility of $D$, and $d_n + n$ is not prime. The Chinese Remainder Theorem and the non-existence of an infinite arithmetic progression of primes guarantees that this is always possible. Once again, the sets $D$ are arbitrarily sparse infinite admissible sets that cannot be translated into a subset of the primes.

\vskip 20pt\noindent {\bf Acknowledgement.} The author thanks Dr.~Thomas Bloom for reviewing an earlier draft of this paper as well as for being an attentive advisor in the Oxford mathematics master's program.


\begin{thebibliography}{1}\footnotesize

\bibitem{ErdosProblems} T. Bloom, \emph{Erd\H{o}s Problems}. erdosproblems.com.

\bibitem{ErdosBook} P. Erd\H{o}s and R. Graham, {\it Old and New Problems and Results in Combinatorial Number Theory}, Monographies de L'Enseignement Mathematique, Geneva, 1980.

\bibitem{Artin1} R. Gupta and M. Ram Murty, A remark on Artin's conjecture, {\it Invent. Math.} {\bf 78} (1984), 127–130.

\bibitem{Artin2} D. R. Heath-Brown, Artin's conjecture for primitive roots, {\it Q. J. Math.} {\bf 37} (1) (1986), 27-38.

\end{thebibliography}
\end{document}